\documentclass[12pt,a4paper]{article}
\usepackage{amssymb,amsmath,amsthm}
\usepackage{graphicx}

\newcommand\cC{{\mathcal C}}

\newcommand\cF{{\mathcal F}}

\newcommand\bC{\mathbf C}
\newcommand\cG{{\mathcal G}}

\newcommand\cL{{\mathcal L}}

\newcommand\cP{{\mathcal P}}

\newcommand\cS{{\mathcal S}}

\theoremstyle{plain}
\newtheorem{theorem}{Theorem}[section]
\newtheorem{lemma}[theorem]{Lemma}

\newtheorem{proposition}[theorem]{Proposition}

\theoremstyle{definition}

\newtheorem{claim}[theorem]{Claim}

\newcommand\cref[1]{Corollary~\ref{cor:#1}}


\textheight=8in \textwidth=6.8in \topmargin=0.3in \oddsidemargin=-0.2in
\evensidemargin=0in
\title{Vertex Tur\'an problems for the oriented hypercube}
\author{D\'aniel Gerbner$^{a,}$, Abhishek Methuku$^b$, D\'aniel T. Nagy$^{a,}$, \\ Bal\'azs Patk\'os$^{a,}$, M\'at\'e Vizer$^{a,}$ \\
\small $^a$ Alfr\'ed R\'enyi Institute of Mathematics, Hungarian Academy of Sciences\\
\small P.O.B. 127, Budapest H-1364, Hungary.\\
\small $^b$ Central European University, Department of Mathematics\\
\small Budapest, H-1051, N\'ador utca 9.\\
\medskip
\small \texttt{\{gerbner,nagydani,patkos\}@renyi.hu, \{abhishekmethuku,vizermate\}@gmail.com}
\medskip}

\begin{document}
\maketitle
\begin{abstract}

In this short note we consider the oriented vertex Tur\'an problem in the hypercube: for a fixed oriented graph $\overrightarrow{F}$, determine the maximum size $ex_v(\overrightarrow{F}, \overrightarrow{Q_n})$ of a subset $U$ of the vertices of the oriented hypercube $\overrightarrow{Q_n}$ such that the induced subgraph $\overrightarrow{Q_n}[U]$ does not contain any copy of  $\overrightarrow{F}$. We obtain the exact value of $ex_v(\overrightarrow{P_k}, \overrightarrow{Q_n})$ for the directed path $\overrightarrow{P_k}$, the exact value of $ex_v(\overrightarrow{V_2}, \overrightarrow{Q_n})$ for the directed cherry $\overrightarrow{V_2}$ and the asymptotic value of $ex_v(\overrightarrow{T}, \overrightarrow{Q_n})$ for any directed tree $\overrightarrow{T}$.

\end{abstract}

\section{Introduction}
One of the most studied problems in extremal combinatorics is the so-called Tur\'an problem originated in the work of Tur\'an \cite{T1941} (for a recent survey see \cite{FS2013}). A basic problem of this sort asks for the maximum possible number of edges $ex(F,G)$ in a subgraph $G'$ of a given graph $G$ that does not contain $F$ as a subgraph.


Much less attention is paid to the vertex version of this problem. This problem can be formalized as follows: what is the the maximum size $ex_v(F,G)$, of a subset $U$ of vertices of a given graph $G$ such that $G[U]$ does not contain $F$ as a subgraph.


We will consider Tur\'an type problems for the \textit{n-dimensional hypercube} $Q_n$, the graph with vertex set $V_n=\{0,1\}^n$ corresponding to subsets of an $n$-element set and edges between vertices that differ in exactly one coordinate.

Edge-Tur\'an problems in the hypercube have attracted a lot of attention. This research was initiated by Erd\H{o}s \cite{erdos1984}, who conjectured $ex(C_4,Q_n)=(1+o(1))n 2^{n-1}$, i.e., any subgraph of $Q_n$ having significantly more than half of the edges of $Q_n$ must contain a copy of $C_4$. This problem is still unsolved. Conlon \cite{conlon} showed, extending earlier results due to Chung \cite{Chu92} and F\"uredi and \"Ozkahya \cite{FurOzk09, FurOzk11}, that $ex(C_{2k},Q_n)=o(n2^n)$ for $k\neq 2,3,5$. 

Concerning the vertex Tur\'an problem in the hypercube $Q_n$, it is obvious that we can take half of the vertices of $Q_n$ such that they induce no edges.
Kostochka \cite{K} and later, independently, Johnson and Entringer \cite{JE} showed 
$ex_v(C_4,Q_n)=\max_j\{\sum_{i\not\equiv j~\mod 3}\binom{n}{i}\}$. Johnson and Talbot \cite{JT}  proved a local stability version of this result. Chung, F\"uredi, Graham, Seymour \cite{CFGS} proved that if $U$ contains more than $2^{n-1}$ vertices, then there is a vertex of degree at least $\frac{1}{2}\log n - \frac{1}{2}\log \log n +\frac{1}{2}$ in $Q_n[U]$. This shows that for any star $S_k$ with $k$ fixed, we have $ex_v(S_k,Q_n)=2^{n-1}$ for large enough $n$. Alon, Krech, and Szab\'o \cite{AKS} investigated the function $ex_v(Q_d,Q_n)$.

\smallskip
Let us note that there is a simple connection between the edge and the vertex Tur\'an problems in the hypercube. 

\begin{proposition}
$ex_v(F,Q_n)\le 2^{n-1}+\frac{ex(F,Q_n)}{n}$.

\end{proposition}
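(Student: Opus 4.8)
The plan is to bound from below the number of edges spanned by a large vertex subset, and then apply the edge-Tur\'an bound $ex(F,Q_n)$. Let $U\subseteq V_n$ be a set of vertices such that $Q_n[U]$ contains no copy of $F$. If $|U|\le 2^{n-1}$ the asserted inequality is immediate, so we may assume $|U|>2^{n-1}$.

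First I would record that $Q_n$ is $n$-regular on $2^n$ vertices, hence has exactly $n2^{n-1}$ edges. Next, every edge of $Q_n$ that fails to be an edge of $Q_n[U]$ has at least one endpoint in $V_n\setminus U$; summing degrees over $V_n\setminus U$ counts each such edge once or twice, so the number of edges of $Q_n$ meeting $V_n\setminus U$ is at most $\sum_{v\in V_n\setminus U}\deg(v)=n(2^n-|U|)$. Subtracting from the total edge count gives
\[
e(Q_n[U])\ \ge\ n2^{n-1}-n(2^n-|U|)\ =\ n\bigl(|U|-2^{n-1}\bigr).
\]

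Finally, $Q_n[U]$ is a subgraph of $Q_n$ containing no copy of $F$, so by the definition of $ex(F,Q_n)$ we have $e(Q_n[U])\le ex(F,Q_n)$. Combining the two displayed-type inequalities yields $n(|U|-2^{n-1})\le ex(F,Q_n)$, i.e. $|U|\le 2^{n-1}+\frac{ex(F,Q_n)}{n}$; taking the maximum over all admissible $U$ finishes the proof.

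There is essentially no serious obstacle here: the argument is a one-line double-counting estimate. The only point requiring a small amount of care is the bound on the number of edges incident to $V_n\setminus U$, where an edge inside $V_n\setminus U$ is counted twice; but since we only need an \emph{upper} bound on this quantity, the crude estimate $n(2^n-|U|)$ is exactly what we want, so no refinement is needed.
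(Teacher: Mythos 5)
Your proof is correct and is essentially the paper's argument: both amount to the double count $e(Q_n[U])\ge n\left(|U|-2^{n-1}\right)$ followed by comparison with $ex(F,Q_n)$. The only cosmetic difference is that the paper organizes the count per coordinate direction (each direction being a perfect matching of size $2^{n-1}$, so each direction contributes more than $\frac{ex(F,Q_n)}{n}$ edges inside $U$), whereas you sum degrees over $V_n\setminus U$; the resulting bound is identical.
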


\begin{proof} If a subgraph $G$ of $Q_n$ contains more than $2^{n-1}+\frac{ex(F,Q_n)}{n}$ vertices, then it contains more than $\frac{ex(F,Q_n)}{n}$ edges in every direction, thus more than $ex(F,Q_n)$ edges altogether, hence $G$ contains a copy of $F$.

\end{proof}

This observation implies that for every tree $T$, we have $ex_v(T,Q_n)=\left(\frac{1}{2}+\mathcal{O}\left(\frac{1}{n}\right)\right)2^n$, using the well-known result from Tur\'an theory which states $ex(n,T)=O(n)$ (and so $ex(F,Q_n) = \mathcal{O}(2^n)$). Also, together with Conlon's result on the cycles mentioned earlier, we obtain $ex_v(C_k,Q_n)=\left(\frac{1}{2}+o(1)\right)2^n$ for $k\neq 2,3,5$.

\bigskip

In this paper, we consider an oriented version of this problem. There is a natural orientation of the edges of the hypercube. An edge $uv$ means that $u$ and $v$ differ in only one coordinate; if $u$ contains $1$ and $v$ contains $0$ in this coordinate, then we direct the edge from $v$ to $u$. We denote the hypercube $Q_n$ with this orientation by $\overrightarrow{Q_n}$. With this orientation it is natural to forbid oriented subgraphs. We will denote by $ex_v(\overrightarrow{F}, \overrightarrow{Q_n})$ the maximum number of vertices that an $\overrightarrow{F}$-free subgraph of $\overrightarrow{Q_n}$ can have. As vertices of the hypercube correspond to sets, instead of working with subsets of the vertices of $\overrightarrow{Q}_n$ we will consider families $\cG\subseteq 2^{[n]}$ of sets. We will say that $\cG\subseteq 2^{[n]}$ is $\overrightarrow{F}$-free if for the corresponding subset $U$ of vertices of $\overrightarrow{Q}_n$ the induced subgraph $\overrightarrow{Q}_n[U]$ is $\overrightarrow{F}$-free.

For example, there is only one orientation of $C_4$ that embeds into the hypercube, we will denote it by $\overrightarrow{C_4}$. Hence we have $ex_v(\overrightarrow{C_4}, \overrightarrow{Q_n})=ex_v(C_4,Q_n)$, which is known exactly, due to the above mentioned result of Kostochka and Johnson and Entringer.
However, there are three different orientations of $P_3$, according to how many edges go towards the middle vertex: $\overrightarrow{V_2}$ denotes the orientation with a source (i.e., $\overrightarrow{V_2}$ is the path $abc$ such that the edge $ab$ is directed from $b$ to $a$ and the edge $bc$ is directed from $b$ to $c$).
The directed path $\overrightarrow{P_k}$ is a path on $k$ vertices $v_1,\dots,v_k$ with edges going from $v_i$ to $v_{i+1}$ for every $i<k$. The \emph{height} of a directed graph is the length of a longest directed path in it.

If we consider the hypercube as the Boolean poset, then each edge of the hypercube goes between a set $A$ and a set $A\cup \{x\}$ for some $x\not\in A$. Then in $\overrightarrow{Q_n}$ the corresponding directed edge goes from $A$ to $A\cup \{x\}$. A directed acyclic graph $\overrightarrow{F}$ can be considered as a poset $F$; we will say that $F$ is the \textit{poset of} $\overrightarrow{F}$. The poset corresponding to a directed tree is said to be a \textit{tree poset}. Forbidding copies of a poset in a family of sets in this order-preserving sense has an extensive literature (see \cite{griggs2016progress} for a survey on the theory of forbidden subposets). We say $\cP\subset 2^{[n]}$ is a \textit{copy} of $P$ if there exists a bijection $f:P\rightarrow \cP$ such that $p<p'$ implies $f(p)\subset f(p')$. We say that $\cF\subset 2^{[n]}$ is \textit{P-free}, if there is no $\cP \subset \cF$ that is a copy of $P$. Observe that if $P$ is the poset of the directed acyclic graph $\overrightarrow{F}$, then any $P$-free family is $\overrightarrow{F}$-free.

The oriented version of the vertex Tur\'an problem in the hypercube corresponds to the following variant of the forbidden subposet problem.  We say $\cP\subset 2^{[n]}$ is a \emph{cover-preserving copy} of $P$ if there exists a bijection $f:P\rightarrow \cP$  such that if $p$ covers $p'$ in $P$, then $f(p)$ covers $f(p')$ in the Boolean poset. Thus it is not surprising that we can use techniques and results from the theory of forbidden subposet problems in our setting.


In this paper, we consider Vertex Tur\'an problems for directed trees. Our main result determines the asymptotic value of the vertex Tur\'an number $ex_v(\overrightarrow{T},\overrightarrow{Q_n})$ for any directed tree $\overrightarrow{T}$.

\begin{theorem}\label{dirtree}
For any directed tree $\overrightarrow{T}$ of height $h$, we have
$$ex_v(\overrightarrow{T},\overrightarrow{Q_n})=\left(\frac{h-1}{h}+o(1)\right)2^n.$$
\end{theorem}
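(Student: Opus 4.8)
I would prove the two bounds separately.

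For the \textbf{lower bound}, take $\cF=\bigcup_{i\not\equiv h-1\pmod h}\binom{[n]}{i}$. Any copy of $\overrightarrow T$ in $\overrightarrow{Q_n}[\cF]$ contains a directed path on $h$ vertices (as $\overrightarrow T$ has height $h$), whose vertices are $h$ sets of consecutive sizes; one of those sizes is $\equiv h-1\pmod h$, so that set is not in $\cF$ — a contradiction. Hence $\cF$ is $\overrightarrow T$-free, and $|\cF|=2^n-\sum_{i\equiv h-1\pmod h}\binom ni=\left(\tfrac{h-1}{h}+o(1)\right)2^n$, since a roots-of-unity filter (or the concentration of the binomial distribution) gives $\sum_{i\equiv r\pmod h}\binom ni=\left(\tfrac1h+o(1)\right)2^n$ for every fixed $r$.

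The \textbf{upper bound} rests on the elementary fact that if $A\subseteq\{0,1,\dots,n\}$ contains no $h$ consecutive integers, then $\sum_{i\in A}\binom ni\le\left(\tfrac{h-1}{h}+o(1)\right)2^n$: writing $B=\{0,\dots,n\}\setminus A$, successive elements of $B$ differ by at most $h$, so restricting to a bulk window $M=\{i:|i-n/2|\le C\sqrt n\}$ on which consecutive binomials agree up to a factor $1+o(1)$, and charging each $i\in M$ to the least $b\in B$ with $b\ge i$ (each $b$ getting at most $h$ charges), yields $(1-o(1))2^n\le\sum_{i\in M}\binom ni\le h(1+o(1))\sum_{b\in B}\binom nb$. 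I would first settle the \emph{directed path} $\overrightarrow{P_h}$. If $\cG$ is $\overrightarrow{P_h}$-free, then for every permutation $\sigma$ of $[n]$ the set $S^\sigma=\{i:\{\sigma(1),\dots,\sigma(i)\}\in\cG\}$ has no $h$ consecutive integers (a block of $h$ of them would be a saturated chain, i.e.\ a copy of $\overrightarrow{P_h}$), so $\sum_{i\in S^\sigma}\binom ni\le\left(\tfrac{h-1}{h}+o(1)\right)2^n$ by the fact above. Averaging over the $n!$ permutations and using that each $i$-set is $\{\sigma(1),\dots,\sigma(i)\}$ for exactly $i!(n-i)!$ of them (so $\frac1{n!}\sum_\sigma\sum_{i\in S^\sigma}\binom ni=\sum_i|\cG\cap\binom{[n]}{i}|$) gives
\[
|\cG|=\frac1{n!}\sum_\sigma\sum_{i\in S^\sigma}\binom ni\le\left(\tfrac{h-1}{h}+o(1)\right)2^n ,
\]
which with the lower bound proves the theorem for $\overrightarrow T=\overrightarrow{P_h}$.

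For a \textbf{general directed tree} $\overrightarrow T$ of height $h$ I would show that $|\cG|\ge\left(\tfrac{h-1}{h}+\eps\right)2^n$ already forces a copy of $\overrightarrow T$. The contrapositive of the elementary fact gives $h$ consecutive levels on each of which $\cG$ has density $\ge\delta=\delta(\eps,h)>0$ (otherwise the $\delta$-dense levels, having no $h$ in a row, contribute at most $\left(\tfrac{h-1}{h}+o(1)\right)2^n$ sets and the rest at most $\delta2^n<\tfrac\eps2 2^n$). Since the $o(1)$ in the path bound is effective, a standard averaging over random $d$-dimensional subcubes ($d=d(\eps)$) upgrades density $>\tfrac{h-1}{h}+\eps$ to: $\cG$ contains $\Omega_\eps(2^n)$ saturated $h$-chains, spread over many $h$-blocks in each level of which $\cG$ is dense. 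Finally, fixing a root $r$ of $\overrightarrow T$ and setting $\ell(v)=$ (net number of forward edges on the root--$v$ path), one checks — since the Hasse diagram of $\overrightarrow T$ is a tree and $\overrightarrow T$ has height $h$ — that $\ell$ takes values in $h$ consecutive integers and carries every edge to a cover relation, so $\overrightarrow T$ is built from $\overrightarrow{P_h}$ by successively attaching leaves, each at a level already present. One then shows inductively that $\cG$ contains many copies of each intermediate tree: given many copies of $\overrightarrow{T'}$, a Kruskal--Katona / supersaturation estimate (almost every set of a $\delta$-dense level has $\Omega(n)$ neighbours in the adjacent dense level inside $\cG$) lets most of them be extended by the next leaf while keeping the new vertex distinct.

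\textbf{Main obstacle.} The lower bound and the directed-path case are clean; the work is in the last step. The trap is that ``$h$ consecutive $\delta$-dense levels'' does not by itself yield even one saturated $h$-chain: one can arrange a configuration on $h$ levels, each $\delta$-dense, in which the part of a level reachable from below is precisely the part that fails to reach upward. So the argument must genuinely use that $\cG$ is globally large — supplying the saturated $h$-chains through the path theorem before the combinatorial tree-embedding can run — and making this self-contained with the exact constant $\tfrac{h-1}{h}$ is the crux.
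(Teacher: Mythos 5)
Your lower bound and your treatment of the directed path $\overrightarrow{P_h}$ are correct (the path argument is essentially the paper's proof of Theorem \ref{path}: a Lubell-type averaging over maximal chains plus the elementary fact about sets with no $h$ consecutive integers). But the statement at hand is the general tree theorem, and there your proposal has a genuine gap precisely where the real work lies: passing from ``$|\cG|\ge(\frac{h-1}{h}+\eps)2^n$'' to a copy of an \emph{arbitrary} directed tree of height $h$. The step you invoke to extend partial embeddings --- ``almost every set of a $\delta$-dense level has $\Omega(n)$ neighbours in the adjacent dense level inside $\cG$'' --- is false as stated: take the part of level $i$ to be all $i$-sets containing the element $1$ and the part of level $i+1$ to be all $(i+1)$-sets avoiding $1$; both are $\delta$-dense but there are no cover relations between them at all, so density alone gives no local expansion and no Kruskal--Katona-type supersaturation of the kind you need. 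Likewise, ``many saturated $h$-chains spread over dense levels'' (even if obtained via subcube averaging and the path case) does not by itself let you attach the remaining leaves of $\overrightarrow{T}$ while keeping vertices distinct; you acknowledge this yourself in your ``main obstacle'' paragraph, so the crux of the upper bound is left unresolved rather than proved.

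For comparison, the paper closes exactly this gap with Bukh's machinery: first reduce to a \emph{saturated} tree poset of the same height (Lemma \ref{inducedsaturated}), then decompose $\cF$ into windows $\cF_i$ of $k=h$ consecutive levels, show via the Lubell function that a too-large window forces many \emph{fat} chains (chains meeting $\cF_i$ in $k$ sets, Lemma \ref{bukh1}), and then run an induction on $|T|$ (Lemma \ref{bukhmain}) in which a leaf-interval $I$ is removed (Lemma \ref{saturatedgrow}) and the notion of \emph{bottleneck} chains with small witnesses guarantees that a copy of $T\setminus I$ built on ``good'' fat chains can always be completed to a copy of $T$ by fresh sets; summing the window bounds gives $(\frac{h-1}{h}+o(1))2^n$. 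Some counting of many $h$-chains is indeed the starting point, as you guessed, but the bottleneck/witness argument (not a density-expansion claim) is what makes the leaf-attachment work, and your sketch contains no substitute for it.
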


Below we obtain the exact value of the vertex Tur\'an number for some special directed trees (namely $\overrightarrow{V_2}$ and $\overrightarrow{P_k}$).

\begin{theorem}\label{V}
$$ex_v(\overrightarrow{V_2},\overrightarrow{Q_n})=2^{n-1}+1.$$
\end{theorem}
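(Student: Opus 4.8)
Let $\cE=\{S\subseteq[n]:|S|\text{ is even}\}$; then $|\cE|=2^{n-1}$ and $\overrightarrow{Q_n}[\cE]$ has no edge at all, since every edge of $\overrightarrow{Q_n}$ joins a set of even size to one of odd size. I add one more vertex $T$, taking $T=[n]$ if $n$ is odd and $T=\{1,\dots,n-1\}$ if $n$ is even. In either case $|T|\ge n-1$, so $T$ has at most one superset of the form $T\cup\{x\}$, namely $[n]$, which lies in $\cE$. Hence in $\overrightarrow{Q_n}[\cE\cup\{T\}]$ the vertex $T$ has out-degree $\le 1$, and each $S\in\cE$ has out-degree $\le 1$ as well, its only possible out-neighbour in $\cE\cup\{T\}$ being $T$ (every other $S\cup\{x\}$ has odd size). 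Since any copy of $\overrightarrow{V_2}$ contains a vertex of out-degree $\ge 2$, the family $\cE\cup\{T\}$ is $\overrightarrow{V_2}$-free, so $ex_v(\overrightarrow{V_2},\overrightarrow{Q_n})\ge 2^{n-1}+1$.

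\textbf{Upper bound.} The basic observation is that $\cG\subseteq 2^{[n]}$ is $\overrightarrow{V_2}$-free exactly when no $S\in\cG$ has two immediate supersets in $\cG$, i.e. $\cG$ contains no ``up-cherry'' $\{S,S\cup\{x\},S\cup\{y\}\}$. I would prove by induction on $n$ the strengthened statement: \emph{every $\overrightarrow{V_2}$-free $\cG\subseteq 2^{[n]}$ has $|\cG|\le 2^{n-1}+1$, and if $|\cG|=2^{n-1}+1$ then $\emptyset\in\cG$ and $[n]\in\cG$.} For the inductive step, fix the coordinate $n$ and write $\cG_0=\{S\in\cG:n\notin S\}$, $\cG_1=\{S\setminus\{n\}:S\in\cG,\ n\in S\}$, both inside $2^{[n-1]}$, so $|\cG|=|\cG_0|+|\cG_1|$. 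A short check shows $\cG_0,\cG_1$ are again $\overrightarrow{V_2}$-free, and that every $A\in I:=\cG_0\cap\cG_1$ is a top of $\cG_0$ — it has no immediate superset inside $\cG_0$, since the edge $A\to A\cup\{n\}$ already realises the unique out-edge allowed at $A$. With $t=|I|$ we get $|\cG|=|\cG_0\cup\cG_1|+t\le 2^{n-1}+t$, so the case $t\le 1$ is finished immediately; the equality cases $|\cG|=2^{n-1}+1$ then follow from the inductive hypothesis applied to whichever of $\cG_0,\cG_1$ is extremal, plus a little book-keeping to locate $\emptyset$ and $[n]$.

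The remaining case is $t\ge 2$: here the inductive hypothesis gives $|\cG_0|,|\cG_1|\le 2^{n-2}+1$, hence $|\cG|\le 2^{n-1}+2$, and it suffices to rule out $|\cG|=2^{n-1}+2$. If $|\cG|=2^{n-1}+2$ then, by the same counting applied to \emph{every} coordinate $i$, both of the corresponding halves $\cG_0^{(i)},\cG_1^{(i)}$ are extremal in $2^{[n-1]}$. Applying the strengthened hypothesis, each $\cG_1^{(i)}$ contains the empty set, which forces $\{i\}\in\cG$ for every $i$, while $\cG_0^{(n)}$ being extremal forces $\emptyset\in\cG$. But then $\{\emptyset,\{1\},\{2\}\}\subseteq\cG$ is an up-cherry (for $n\ge 2$), contradicting $\overrightarrow{V_2}$-freeness. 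Thus $|\cG|\le 2^{n-1}+1$, completing the induction.

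The step I expect to require the most care is maintaining the strengthened part of the hypothesis, in particular ``extremal $\Rightarrow\ [n]\in\cG$'' (the assertion $\emptyset\in\cG$ is handled analogously). Assuming $[n]\notin\cG$, the coordinate-$i$ split shows, via ``extremal $\Rightarrow$ ground set is present'', that $\cG_0^{(i)}$ is extremal for every $i$, which forces $\cG$ to contain all $(n-1)$-element sets; then no $(n-2)$-element set can lie in $\cG$, and a level-by-level accounting of the remaining sets — using that each set at level $k$ carries at most one of its upper neighbours in $\cG$ — should overshoot $|\cG|=2^{n-1}+1$, yielding the contradiction. Making this accounting precise is the technical crux. (The base cases $n\le 3$ are immediate: the up-graph of a $\overrightarrow{V_2}$-free family is a forest, so $|\cG|$ equals its number of edges plus its number of components, while $\cG$ has at least $|\cG|-2^{n-1}$ edges in each of the $n$ coordinate directions, and for $n\le 3$ these two facts already force $|\cG|\le 2^{n-1}+1$.)
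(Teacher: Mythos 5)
Your lower bound is correct, and the overall skeleton of your upper bound (induction on $n$, splitting along a coordinate, getting $|\cG|\le 2^{n-1}+2$, then ruling out equality) is the same as the paper's. But the strengthened induction hypothesis you rely on is false: it is \emph{not} true that every $\overrightarrow{V_2}$-free family of size $2^{n-1}+1$ contains $\emptyset$. Already for $n=2$ the family $\{\{1\},\{2\},\{1,2\}\}$ is extremal and omits $\emptyset$; for general even $n$ the paper's own extremal construction (all sets of odd size together with $[n]$) omits $\emptyset$, and for odd $n$ so does ``all sets of odd size plus one set of size $n-1$''. This is fatal exactly where you need it: in the case $t\ge 2$ you rule out $|\cG|=2^{n-1}+2$ by arguing that extremality of each $\cG_1^{(i)}$ forces $\{i\}\in\cG$ and extremality of $\cG_0^{(n)}$ forces $\emptyset\in\cG$, producing the up-cherry $\{\emptyset,\{1\},\{2\}\}$; both deductions use the false ``$\emptyset\in\cG$'' clause. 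Note also that replacing it by the true clause ``$[n]\in\cG$'' alone does not suffice: knowing that every restriction contains its ground set only yields that all $(n-1)$-sets and $[n]$ lie in $\cG$, which is itself $\overrightarrow{V_2}$-free, so no contradiction arises without more information.

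The underlying issue is an asymmetry you treated as a symmetry: $\overrightarrow{V_2}$ is an ``up-cherry'' (a source with two out-edges), and complementation turns a $\overrightarrow{V_2}$-free family into a family with no \emph{down}-cherry, a different forbidden configuration; so the top-of-cube facts have no valid bottom-of-cube analogues, and your plan to handle ``$\emptyset\in\cG$ analogously'' to ``$[n]\in\cG$'' cannot be carried out. The correct repair is the paper's: show that to any $\overrightarrow{V_2}$-free family one may always add $[n]$, and, if it contains no set of size $n-1$, also one such set (adding either creates no source with two upper covers); hence every maximum family contains $[n]$ \emph{and} some $(n-1)$-set. Then, if $|\cG|=2^{n-1}+2$, every coordinate split is forced to be extremal on both sides, so for every $i$ the family $\cG_0^{(i)}$ contains $[n]\setminus\{i\}$ and some $(n-2)$-set of $[n]\setminus\{i\}$; consequently $\cG$ contains all $(n-1)$-sets and an $(n-2)$-set, which is an up-cherry — the desired contradiction. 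With that claim in place your $t$-based counting is unnecessary, and the equality-case ``book-keeping'' you postponed (which, as stated, cannot be completed because the claim it maintains is false) disappears.
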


It would be natural to consider the following generalization of $\overrightarrow{V_2}$: let $\overrightarrow{V_r}$ denote the star with $r$ leaves all edges oriented towards the leaves. Note that if one takes the elements of the $r$ highest levels of the Boolean poset and every other level below them, then the corresponding family in $\overrightarrow{Q_n}$ will be $\overrightarrow{V_r}$-free. Computing the size of this family we have $ex_v(\overrightarrow{V_r},\overrightarrow{Q_n})=2^{n-1}+\Omega(n^{r-2})$. We conjecture that $ex_v(\overrightarrow{V_r},\overrightarrow{Q_n})=2^{n-1}+\Theta(n^{r-2})$ holds for every $r\ge 3$. 

\begin{theorem}\label{path}
For any pair $k,n$ of integers with $k\le n$ we have
$$ex_v(\overrightarrow{P_k},\overrightarrow{Q_n})=\max_{j\in [k]}\left\{\sum_{i\not\equiv j ~\text{mod}\ k}\binom{n}{i}\right\}.$$
\end{theorem}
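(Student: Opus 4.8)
The plan is to prove the two bounds separately. For the lower bound, observe that a directed path $\overrightarrow{P_k}$ corresponds (as a cover-preserving copy) to a chain $A_1 \subset A_2 \subset \cdots \subset A_k$ in the Boolean poset with $|A_{i+1}| = |A_i| + 1$, i.e., $k$ sets of $k$ consecutive sizes. Hence, if we take $\cF = \{A \subseteq [n] : |A| \not\equiv j \pmod k\}$ for the value of $j$ maximizing $\sum_{i \not\equiv j} \binom{n}{i}$, then $\cF$ omits a full level in every window of $k$ consecutive levels, so $\cF$ contains no cover-preserving copy of $\overrightarrow{P_k}$; thus $\cF$ is $\overrightarrow{P_k}$-free and $ex_v(\overrightarrow{P_k},\overrightarrow{Q_n}) \ge \max_j \sum_{i \not\equiv j} \binom{n}{i}$.

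For the upper bound, let $\cG$ be a $\overrightarrow{P_k}$-free family. The key observation is that a copy of $\overrightarrow{P_k}$ inside $\overrightarrow{Q_n}[U]$ is \emph{exactly} a saturated chain of length $k-1$ in the Boolean poset all of whose members lie in $\cG$; so $\cG$ being $\overrightarrow{P_k}$-free means $\cG$ contains no $k$ sets forming a saturated chain. I would then partition the ground poset $2^{[n]}$ into saturated chains — concretely, use a symmetric chain decomposition of the Boolean lattice (de Bruijn–Tengbergen–Kruyswijk), or more simply decompose each maximal chain and count. The cleanest route: for a fixed maximal chain $\emptyset = C_0 \subset C_1 \subset \cdots \subset C_n = [n]$, the sets of $\cG$ appearing on it contain no $k$ consecutive ones, so they miss at least $\lfloor (n+1)/k \rfloor$ of the $n+1$ levels; averaging over all $n!$ maximal chains (each set of size $i$ lies on $i!(n-i)!$ of them) gives a bound, but to get the \emph{exact} extremal value one must be more careful.

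The sharper argument I would use is a chain-partition plus LYM-type weighting restricted to windows of $k$ levels: partition $[0,n]$ (the levels) into consecutive blocks of length $k$ (with one short block of length $n+1 - k\lfloor (n+1)/k\rfloor$), and on each maximal chain $\cG$ can contain at most $k-1$ sets per full block; summing the LYM inequality $\sum_{A \in \cG} 1/\binom{n}{|A|} \le$ (number of full blocks)$\cdot(k-1) + (\text{short block size})$ and then optimizing which residue class $j$ to delete — equivalently, which level inside each block to forbid — recovers $\max_j \sum_{i \not\equiv j} \binom{n}{i}$. The technically fiddly point, and the one I expect to be the main obstacle, is handling the boundary block of size $< k$ cleanly so that the bound matches the construction exactly for \emph{all} pairs $k \le n$ (not just when $k \mid n+1$); I would deal with this by choosing the partition of levels into blocks so that the short block is the one the extremal family keeps in full, and verifying that the per-chain "at most $k-1$ per full block" bound together with the short block still sums, over a symmetric chain decomposition, to exactly $\max_j \sum_{i\not\equiv j}\binom{n}{i}$. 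Since the theorem is an equality, both directions are needed and must meet exactly; the lower-bound construction tells us precisely which weighting/partition to run on the upper-bound side.
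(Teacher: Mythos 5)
Your lower bound is exactly the intended construction and is fine. The gap is in the upper bound: every tool you propose (counting $|\cC\cap\cG|$ along maximal chains, per-block ``at most $k-1$'' counts, LYM/Lubell inequalities, symmetric chain decompositions) only controls the \emph{number} of members of $\cG$ on each chain, i.e.\ the Lubell function $\sum_{A\in\cG}1/\binom{n}{|A|}$, and that information is genuinely too weak to give the exact value. Concretely, take $k=2$, $n=3$ and $\cG=\binom{[3]}{1}\cup\binom{[3]}{2}$: this family has $6$ sets and meets every maximal chain in only $2$ sets, one in each of your blocks $\{0,1\}$ and $\{2,3\}$, so it satisfies all of your per-chain and per-block constraints and the resulting LYM inequality, yet the true answer is $\max_j\sum_{i\not\equiv j \pmod 2}\binom{3}{i}=4$. (Of course this $\cG$ is not $\overrightarrow{P_2}$-free, but no argument using only per-chain counts can detect that.) The same defect appears in the symmetric chain decomposition variant: for $n=2$, $k=2$ it yields the bound $3$, while $ex_v(\overrightarrow{P_2},\overrightarrow{Q_2})=2$. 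So the difficulty is not the boundary block you flag; it is that uniform counting cannot see \emph{which} levels a chain misses, whereas the exact answer depends on the binomial coefficients of precisely those missed levels.

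The paper closes this gap with a non-uniform weighting in place of a count. Give each $F\in\cG$ the weight $w(F)=\binom{n}{|F|}$ and each maximal chain $\cC$ the weight $w(\cC)=\sum_{F\in\cC\cap\cG}w(F)$; then the average of $w(\cC)$ over all $n!$ maximal chains equals $|\cG|$ exactly, so it suffices to bound $w(\cC)$ for every single chain. Writing $w(\cC)=2^n-\sum_{i=1}^{t}\binom{n}{a_i}$, where $a_1<\dots<a_t$ are the sizes of the chain elements \emph{not} in $\cG$, the $\overrightarrow{P_k}$-free condition says precisely that $a_1\le k-1$, $a_t\ge n-k+1$ and $a_{i+1}-a_i\le k$; a short shifting argument (if some gap is smaller than $k$, push the lower $a_i$'s down, or the upper ones up, toward the tails of the binomial distribution, which only increases $w(\cC)$) shows that the maximum is attained when the $a_i$ form a full residue class modulo $k$, i.e.\ $w(\cC)\le\max_{j\in[k]}\sum_{i\not\equiv j \pmod k}\binom{n}{i}$ for every chain, which gives the desired bound on $|\cG|$. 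Some such reweighting (or an equivalent exact device) is the missing idea in your write-up; as proposed, the LYM/block scheme cannot ``recover'' the stated maximum no matter how the blocks are chosen.
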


\section{Proofs}

\subsection{Proof of Theorem \ref{dirtree}}
We follow the lines of a proof of Bukh \cite{Buk09} that shows that if $T$ is a tree poset with $h(T)=k$ and $\cF\subseteq 2^{[n]}$ is a $T$-free family of sets, then $|\cF|\le (k-1+O(\frac{1}{n}))\binom{n}{\lfloor n/2\rfloor}$ holds.
The proof of this theorem consists of several lemmas. Some of them we will state and use in their original form, some others we will state and prove in a slightly altered way so that we can apply them in our setting. First we need several definitions. For a family $\cF\subseteq 2^{[n]}$, its \textit{Lubell-function} $$\lambda_n(\cF)=\sum_{F\in \cF}\frac{1}{\binom{n}{|F|}}=\frac{1}{n!}\sum_{F\in \cF}|F|!(n-|F|)!$$ is the average number of sets in $\cF$ that a maximal chain $\cC$ in $2^{[n]}$ contains. A poset $P$ is called \textit{saturated} if all its maximal chains have length $h(P)$. For any poset $T$ its \textit{opposite poset} $T'$ consists of the same elements as $T$ with $t\le_{T'} t'$ if and only if $t'\le_T t$. For a family $\cF\subseteq 2^{[n]}$ of sets, its complement family is $\overline{\cF}=\{[n]\setminus F:F\in\cF\}$. Clearly, $\cF$ contains a copy of $P$ if and only if $\overline{\cF}$ contains a copy of $P'$ and $\lambda_n(\cF)=\lambda_n(\overline{\cF})$.


\begin{lemma}[Bukh \cite{Buk09}]\label{inducedsaturated}
Every tree poset $T$ is an induced subposet of a saturated tree poset $T'$ with $h(T)=h(T')$.
\end{lemma}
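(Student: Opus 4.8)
The plan is to \emph{stretch} $T$ by subdividing cover relations and attaching pendant chains until every element acquires a consistent ``level'' in $\{0,1,\dots,h\}$, where $h:=h(T)$. Throughout, the length of a chain is its number of cover relations, so $h(T)$ is the largest such number occurring in $T$.

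First I would define a level function $\ell\colon T\to\{0,1,\dots,h\}$ by letting $\ell(t)$ be the maximum length of a chain of $T$ whose top element is $t$, equivalently the height of the principal down-set $\{s\in T:s\le t\}$. Since $h(T)=h$, indeed $\ell(t)\in\{0,\dots,h\}$ for all $t$, and whenever $t$ covers $t'$ in $T$ we have $\ell(t')\le \ell(t)-1$ (append $t$ to an optimal chain topped at $t'$). Then I would build $T'$ from $T$ by three purely local modifications: (i) for every cover pair where $t$ covers $t'$ with $d:=\ell(t)-\ell(t')\ge 2$, delete that cover relation and insert a fresh chain of $d-1$ new elements strictly between $t'$ and $t$, giving them the levels $\ell(t')+1,\dots,\ell(t)-1$; (ii) below every minimal element $m$ of $T$ with $\ell(m)\ge 1$, attach a fresh chain of $\ell(m)$ new elements occupying levels $0,1,\dots,\ell(m)-1$; (iii) above every maximal element $M$ of $T$ with $\ell(M)<h$, attach a fresh chain of $h-\ell(M)$ new elements occupying levels $\ell(M)+1,\dots,h$. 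This extends $\ell$ to all of $T'$, and by construction $\ell$ increases by exactly $1$ across every cover relation of $T'$.

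Finally I would verify the three required properties. The cover graph of $T'$ is obtained from the cover graph of $T$ (a tree) by subdividing some edges and attaching pendant paths, hence is again a tree, so $T'$ is a tree poset. Every new element lies in the interior of a subdivided edge or on a pendant path, so it is comparable only to old elements that were already comparable to both endpoints of that edge (respectively, to the attachment point); consequently no new comparability is created between two elements of $T$, and comparabilities inside $T$ are unchanged, so $T$ is an induced subposet of $T'$. Moreover every minimal element of $T'$ sits at level $0$, every maximal element of $T'$ sits at level $h$, and each cover relation of $T'$ raises the level by exactly $1$; hence every maximal chain of $T'$ has exactly $h$ cover relations, so $T'$ is saturated and $h(T')=h=h(T)$.

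I do not expect a genuine obstacle here: the only delicate points are the bookkeeping that the inserted elements receive consecutive, previously unused levels (which is what makes $\ell$ a grading of $T'$) and the two structural checks, namely that subdividing edges and adding pendant paths keeps the cover graph a tree and that these additions introduce no spurious comparability among the original elements. The degenerate cases are immediate: if $h=0$ then $T$ is an antichain and we take $T'=T$, and an isolated element is its own saturated extension.
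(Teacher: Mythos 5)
Your construction is correct. A point of comparison: the paper does not prove this lemma at all --- it is quoted from Bukh \cite{Buk09} and used as a black box --- so what you have written is a self-contained substitute for the citation. Your argument is the natural ``grade and stretch'' proof: take the rank function $\ell(t)=$ height of the down-set of $t$, subdivide every cover relation that jumps more than one level, and hang pendant chains above deficient maximal elements; since the Hasse diagram of a tree poset is a tree, subdividing edges and attaching pendant paths keeps it a tree, paths between old elements are unique and pass only through subdivision vertices, so no comparabilities among elements of $T$ are created or destroyed (this is exactly where the tree hypothesis is used, and also why the constructed edges are precisely the cover relations of $T'$ --- a tree admits no shortcut paths), and because every cover of $T'$ raises $\ell$ by exactly one while all minimal elements of $T'$ sit at level $0$ and all maximal ones at level $h$, every maximal chain has length exactly $h$, i.e.\ $T'$ is saturated of the same height. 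Two cosmetic remarks: your step (ii) is vacuous, since with your definition of $\ell$ every minimal element of $T$ already has $\ell(m)=0$ (no harm, just redundant); and in the degenerate case $h=0$ a tree poset is a single point rather than a general antichain, and your note that you count height by cover relations whereas the paper counts vertices of the longest chain is a pure convention shift that does not affect the statement. With those trivialities aside, the proof goes through.
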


An \emph{interval} in a poset $P$ is a set of the form $[x,y] = \{z \in  P : x \le z \le y\}.$

\begin{lemma}[Bukh \cite{Buk09}]\label{saturatedgrow}
If $T$ is a saturated tree poset that is not a chain, then there exists $t \in T$ that is a leaf in $H(T)$ and there exists an interval $I\subset T$ containing $t$ such that $|I|<h(T)$ holds, and $T\setminus I$ is a saturated tree poset with $h(T)=h(T\setminus I)$.
\end{lemma}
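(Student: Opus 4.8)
The plan is to find a leaf $t$ of the Hasse diagram $H(T)$ and an interval $I$ having $t$ as an endpoint, chosen so that deleting $I$ destroys no maximal chain, and then to verify directly that $T\setminus I$ is again a saturated tree poset of the same height $h:=h(T)$. I would first record three structural facts. Because $H(T)$ is a tree, for $x\le y$ in $T$ the interval $[x,y]$ is precisely the vertex set of the unique $x$--$y$ path in $H(T)$; hence every interval of $T$ is a (saturated) chain, and in particular any set $[t,y]$ with $t$ a leaf of $H(T)$ is a ``pendant chain''. Second, a degree-$1$ vertex of $H(T)$ has nothing above it or nothing below it, so every leaf of $H(T)$ is a minimal or a maximal element of $T$. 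Third, saturation makes $T$ graded: there is a rank function $\mathrm{rk}\colon T\to\{1,\dots,h\}$, equal to $1$ on minimal elements, $h$ on maximal elements, and increasing by one along covers.

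Next I would construct a candidate $(t,I)$. Call a vertex of $H(T)$ a \emph{branch vertex} if it has at least two up-covers or at least two down-covers; since $T$ is not a chain and $H(T)$ is connected, a branch vertex exists. Fix a leaf $t$, say a minimal one, and walk up $t=x_0\prec x_1\prec\cdots$ in $H(T)$, continuing through $x_i$ while $x_i$ has exactly one up-cover and exactly one down-cover; let $b=x_j$ be the first vertex where this fails (it exists, since otherwise the $x_i$ would exhaust a maximal chain with no branching and connectedness would force $T$ to equal that chain), and note $j\ge 1$. Set $I=\{x_0,\dots,x_{j-1}\}=[t,x_{j-1}]$. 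Then $I$ is a down-set, so $T\setminus I$ is an up-set, $H(T\setminus I)=H(T)[T\setminus I]$ is $H(T)$ with a pendant path deleted (hence a tree), and $|I|=j\le h-1<h$ since $\mathrm{rk}(x_j)\le h$. Call $t$ \emph{good} if $b$ has a down-cover other than $x_{j-1}$ (and symmetrically for maximal leaves, interchanging ``up'' and ``down'').

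It then remains to prove two things. (i) \emph{A good leaf gives the lemma.} Given a maximal chain $C'$ of $T\setminus I$, extend it to a maximal chain $C$ of $T$. Since $I$ meets $T\setminus I$ only along the edge $x_{j-1}b$ and descends to the minimal element $x_0$, the set $C\cap I$ is either empty or all of $I$; if it is all of $I$ then $b=\min(C')$, and then $C'$ could be extended downward inside $T\setminus I$ by the second down-cover of $b$, contradicting maximality. So $C=C'$ has $h$ elements; thus every maximal chain of $T\setminus I$ has $h$ elements, and a maximal chain of $T$ through that second down-cover of $b$ avoids $I$, so $T\setminus I$ still has height $h$. (ii) \emph{A good leaf exists.} After passing to the opposite poset if needed (this preserves the hypotheses and the symmetric conclusion), we may assume $T$ has a down-branch vertex. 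Among all pairs $(b,w)$ with $b$ a down-branch vertex and $w$ a down-cover of $b$, choose one minimizing $|T_w|$, where $T_w$ is the component of $w$ in $H(T)-bw$. If $T_w$ is a chain, its minimal element is a minimal leaf of $H(T)$ whose upward walk first branches at $b$, so it is good. If $T_w$ is not a chain, minimality of $|T_w|$ rules out any down-branch vertex of $T$ inside $T_w$; an in-degree count in the tree $H(T_w)$ then shows $T_w$ has a unique minimal element and is an upward arborescence, so it has a maximal element $m\neq w$; such an $m$ is a maximal leaf of $H(T)$, and walking down from $m$ the first branch vertex encountered is necessarily an up-branch, so $m$ is good.

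The structural bookkeeping and part (i) are routine. The crux, and what I expect to be the main obstacle, is part (ii): arranging the extremal choice so that the pendant chain we discard is genuinely redundant, i.e. so that every maximal chain through it has a surviving full-length sibling. The case analysis for the shape of $T_w$ when it is not a chain is the technical heart of the argument.
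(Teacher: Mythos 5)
Your part (i) is sound: for a good leaf, the interval $I$ is a pendant saturated chain, it is a down-set (so no new cover relations appear), $H(T\setminus I)$ is a tree, $|I|<h(T)$, and the extension argument correctly shows every maximal chain of $T\setminus I$ is a maximal chain of $T$. The gap is in part (ii), specifically in your Case 1: the claim ``if $T_w$ is a chain, its minimal element is a minimal leaf of $H(T)$ whose upward walk first branches at $b$'' is false in general, because the chain $T_w$ may extend \emph{upward} from $w$ rather than hang below $b$. Concretely, take the five-element fence $T=\{v,w,b,u,c\}$ with cover relations $w\lessdot v$, $w\lessdot b$, $u\lessdot b$, $u\lessdot c$. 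Every maximal chain has two elements, so $T$ is a saturated tree poset of height $2$ and is not a chain; it has a down-branch vertex ($b$), so your reduction does not pass to the opposite poset. The only admissible pairs are $(b,w)$ and $(b,u)$, with $T_w=\{w,v\}$ and $T_u=\{u,c\}$, both two-element chains, so your Case 1 is invoked; but the minimal element of $T_w$ is $w$ itself, which has degree $2$ in $H(T)$ (neighbours $v$ and $b$) and is not a leaf, and likewise for $u$. Thus your procedure outputs no good leaf at all, although good leaves exist ($v$ and $c$ are good \emph{maximal} leaves, with $I=\{v\}$, say). In general the failure occurs exactly when $w$ is not the top element of the chain $T_w$: then either the chain's minimum is $w$ (not a leaf), or it is a leaf whose upward walk stops at $w$, which is an up-branch with a unique down-cover, hence not good. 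So ``having a down-branch vertex'' is not the right criterion for which orientation to work in, and minimality of $|T_w|$ over down-pairs alone does not rescue the claim (in the fence both candidates tie and both fail).

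The repair is not hard, but it is exactly the missing step: either minimize $|T_w|$ over \emph{all} pairs (branch vertex, adjacent cover on its branching side), up-branches and down-branches alike --- then if the minimizing $T_w$ is a chain with $w$ not on top, the up-pair $(w,z)$, $z$ the up-cover of $w$ inside $T_w$, gives a strictly smaller component, a contradiction --- or keep your setup but, when $T_w$ is a chain not topped by $w$, take the maximum of $T_w$, which is a maximal leaf whose downward walk first stops at $w$, an up-branch (it has the extra up-cover $b$), hence good. Two smaller remarks: in Case 2 you should also note that the downward walk from $m$ cannot terminate at the unique minimal element of $T_w$ without meeting an up-branch, since that would force $T_w$ to be the walk path and hence a chain; and your gradedness claim is not actually needed ($|I|<h$ follows simply because the chain $x_0\lessdot\cdots\lessdot x_j$ extends to a maximal chain of size $h$). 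For calibration: the paper does not prove this lemma at all --- it is quoted from Bukh --- so the comparison here is purely about the internal correctness of your argument, and as written the existence half is incomplete.
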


From now on we fix a tree poset $T$ and we denote its height by $k$. We say that a chain in $2^{[n]}$ is \textit{fat} if it contains $k$ members of $\cF$.

\begin{lemma}\label{bukh1}
If $\cF\subseteq \bigcup_{j=i}^{i+k-1} \binom{[n]}{j}$ is a family with $\lambda(\cF)\ge (k-1+\varepsilon)$, then there are at least $(\varepsilon/k)n!$ fat chains.
\end{lemma}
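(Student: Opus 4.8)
I want to show that a family $\cF$ concentrated on $k$ consecutive levels $i, i+1, \dots, i+k-1$ with Lubell function at least $k-1+\eps$ forces at least $(\eps/k)n!$ maximal chains to contain $k$ members of $\cF$ (the "fat" chains). The natural strategy is a double-counting / averaging argument on the $n!$ maximal chains of $2^{[n]}$, weighting each chain by how many members of $\cF$ it meets. Let me define, for a maximal chain $\cC$, the quantity $w(\cC) = |\cC \cap \cF|$, which ranges over $\{0, 1, \dots, k\}$ since $\cF$ lives on only $k$ consecutive levels. Then by definition of the Lubell function, averaging $w$ over all maximal chains gives exactly $\lambda_n(\cF) \ge k - 1 + \eps$: indeed $\sum_{\cC} w(\cC) = \sum_{F \in \cF} \#\{\text{maximal chains through } F\} = \sum_{F\in\cF} |F|!(n-|F|)! = n!\,\lambda_n(\cF)$.

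**The averaging step.** Now I split the sum over chains according to whether the chain is fat ($w = k$) or not ($w \le k-1$). Writing $N$ for the number of fat chains, I get
\[
n!(k-1+\eps) \le \sum_{\cC} w(\cC) = \sum_{\cC \text{ fat}} k + \sum_{\cC \text{ not fat}} w(\cC) \le kN + (k-1)(n! - N).
\]
Rearranging, $n!(k-1+\eps) \le (k-1)n! + N$, so $N \ge \eps\, n!$. This actually gives the slightly stronger bound $\eps n! \ge (\eps/k)n!$, so the claimed bound holds comfortably. The key point making this clean is that $\cF$ occupying exactly $k$ consecutive levels caps $w(\cC)$ at $k$, so there is no "overshoot" from chains meeting many levels.

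**Potential subtleties.** The one thing to be careful about is the normalization of the Lubell function: the excerpt writes $\lambda_n(\cF) = \frac{1}{n!}\sum_{F}|F|!(n-|F|)!$, which is precisely the expected number of members of $\cF$ on a uniformly random maximal chain, so the identity $\sum_\cC w(\cC) = n!\,\lambda_n(\cF)$ is exactly right and no stray factors appear. A second minor point: the lemma as stated writes $\lambda(\cF)$ rather than $\lambda_n(\cF)$, but in context these are the same object. I would also remark that the hypothesis that the levels are \emph{consecutive} is what guarantees $w(\cC)\le k$; if the levels were spread out a chain could still only hit one set per level, but consecutiveness is what the later application supplies anyway, so there is nothing to fix. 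I expect no real obstacle here — this is a short averaging argument — and the bound $(\eps/k)n!$ in the statement is deliberately weaker than what the argument yields (namely $\eps n!$), presumably to match the form needed downstream.
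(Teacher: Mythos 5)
Your proof is correct and follows essentially the same double-counting argument over maximal chains as the paper; the only difference is that by bounding the non-fat chains by $n!-N$ rather than $n!$, you obtain the slightly sharper conclusion $N\ge \eps\, n!$, whereas the paper settles for $C_k\ge(\eps/k)n!$, which is all that is needed later.
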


\begin{proof}
Let $C_i$ denote the number of maximal chains that contain exactly $i$ sets from $\cF$. As $\cF\subseteq \bigcup_{j=i}^{i+k-1} \binom{[n]}{j}$, we have $C_i=0$ for all $i>k$. Then
counting the number of pairs $(F, \cC)$ with $\cC$ being a maximal chain and $F \in \cF \cap \cC$, 
in two different ways, we obtain
\[
\sum_{i=0}^niC_i=\lambda(\cF)n!\ge  (k-1+\varepsilon)n!.
\]
This, and $\sum_iC_i=n!$ imply
\[
kC_k=\sum_{i\ge k}iC_i\ge  \sum_{i=0}^niC_i-(k-1)\sum_{i<k}C_i\ge \varepsilon n!.
\]
Therefore the number of fat chains in $\cF$ is $C_k\ge (\varepsilon/k)n!$.

\end{proof}

\begin{lemma}\label{bukhmain}
Let $T$ be a saturated tree poset of height $k$. Suppose $\cF\subseteq \cup_{j=i}^{i+k-1} \binom{[n]}{j}$ is a family with $n/4\le i \le 3n/4$. Moreover, suppose $\cL$ is a
family of 
fat chains with
$|\cL| >\frac{4\binom{|T|+1}{2}}{n}n!$.
Then there is a copy of $T$ in $\cF$ that contains only sets that are contained in some fat chain in $\cL$.
\end{lemma}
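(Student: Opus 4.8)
The plan is to prove the statement by induction on the number of elements $|T|$, the point of the quantity $\tfrac{4\binom{|T|+1}{2}}{n}n!$ being that it decreases by a controlled amount when we pass from $T$ to a proper sub-poset. If $T$ is a chain then $|T|=k$, and since $\cL$ is nonempty we may take any $\cC\in\cL$: the $k$ members of $\cF$ lying on the maximal chain $\cC$ sit one on each of the levels $i,i+1,\dots,i+k-1$ (being $k$ sets on a chain inside $k$ levels), and consecutive ones in the size order form cover relations, so together they form a copy of $T$ all of whose sets lie on $\cC$. This settles the base case.

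So assume $T$ is not a chain and apply Lemma~\ref{saturatedgrow} to obtain a leaf $t$ of $H(T)$ and an interval $I\ni t$ with $|I|<k$ such that $T_0:=T\setminus I$ is a saturated tree poset of height $k$. Since $T$ is a tree poset, $I$ is a chain $z_0\lessdot z_1\lessdot\dots\lessdot z_m=t$ with $m+1=|I|\le k-1$, and we may assume that $t$ is a maximal element of $T$ (if $t$ is minimal, run the symmetric argument, reflecting all set-containments). Then $z_0$ is not minimal in $T$ — otherwise $I$ would be a maximal chain of $T$ with fewer than $k$ elements, contradicting saturation — so $z_0$ covers some element $r$, and $r$ is the only element covered by $z_0$ since $T_0=T\setminus I$ is connected. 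As $T$ is saturated and $t$ is maximal, $t$ has rank $k$, hence $r$ has rank $\rho:=k-m-1\in\{1,\dots,k-2\}$ in $T$, and also in $T_0$ because deleting $I$ leaves the down-set of $r$ untouched. The crucial observation (used twice below) is that any copy $f_0$ of the saturated height-$k$ poset $T_0$ whose sets all lie on the $k$ levels $i,\dots,i+k-1$ is rank-preserving: a maximal chain of $T_0$ has $k$ elements whose images strictly increase in size inside a window of only $k$ levels, so they occupy these levels in order; in particular $|f_0(r)|=i+\rho-1$ for every such copy.

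Given this, I would first locate a copy of $T_0$ whose "attachment vertex" lands on a set hit by many chains of $\cL$. Call a set $S$ on level $i+\rho-1$ \emph{rare} if fewer than $\theta$ chains of $\cL$ pass through it, where $\theta$ is chosen just below $\tfrac{4}{n}\bigl(\binom{|T|+1}{2}-\binom{|T_0|+1}{2}\bigr)|S|!\,(n-|S|)!$. Deleting from $\cL$ all chains through rare sets removes at most $\theta\binom{n}{i+\rho-1}<\tfrac{4}{n}\bigl(\binom{|T|+1}{2}-\binom{|T_0|+1}{2}\bigr)n!$ chains, so the surviving family $\cL^{-}$ still has more than $\tfrac{4\binom{|T_0|+1}{2}}{n}n!$ members; by the induction hypothesis applied to $(T_0,\cL^{-})$ there is a copy $f_0$ of $T_0$ in $\cF$ with all of its sets on chains of $\cL^{-}$. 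Since $\cR:=f_0(r)$ lies on a chain of $\cL^{-}$ and such a chain passes through no rare set, $\cR$ is not rare, i.e.\ more than $\theta$ chains of $\cL$ run through $\cR$. Now I would pick such a chain $\cC$ whose sets $\cZ_0\lessdot\cZ_1\lessdot\dots\lessdot\cZ_m$ on the levels $i+\rho,\dots,i+k-1$ all lie outside $f_0(T_0)$; extending $f_0$ by $z_j\mapsto\cZ_j$ then yields a copy of $T$ in $\cF$ on chains of $\cL$, because $\cR\lessdot\cZ_0$, because $t$ is maximal, and because $z_0$ covers only $r$ (so no containment is forced that does not already hold). Such a $\cC$ exists by a routine count of maximal chains through a fixed set: at most $O(|T_0|/n)\cdot|\cR|!(n-|\cR|)!$ of the maximal chains through $\cR$ are "blocked", i.e.\ meet $f_0(T_0)$ on one of the levels $i+\rho,\dots,i+k-1$ — here one uses that $|\cR|$ is bounded away from $0$ and $n$, which is exactly what $n/4\le i\le 3n/4$ provides — and $\theta$ is larger than this.

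The hard part will be the bookkeeping that makes the last step go through: the rarity threshold $\theta$ must be chosen large enough that the surviving chains through $\cR$ outnumber the blocked ones, yet small enough that the induction hypothesis still applies to $(T_0,\cL^{-})$, and one must verify that the gap $\binom{|T|+1}{2}-\binom{|T_0|+1}{2}=\tfrac12(m+1)(2|T|-m)$ between the two thresholds — which is always at least $\binom{|I|+1}{2}$ — beats $|T_0|$ times the loss factor $n/(n-|\cR|)$ for every admissible $m$ and $|T|$. This is the only place where the precise constant and the quadratic form $\binom{|T|+1}{2}$ in the hypothesis are needed, and essentially the only genuine inequality-chasing in the argument.
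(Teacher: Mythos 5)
Your proposal keeps the paper's skeleton --- induction on $|T|$, the same base case for chains, Lemma~\ref{saturatedgrow} to strip an interval $I$ containing a leaf, and extension of a copy of $T\setminus I$ along a chain of $\cL$ at a single attachment element $r$ --- but the device guaranteeing extendability is genuinely different. The paper calls a fat chain \emph{bad} if its top $s$ sets form a \emph{bottleneck}, i.e.\ can be blocked by a witness family of size less than $|T|$; it bounds the number of bad chains by $\frac{4|T|}{n}n!$ \emph{before} the induction, runs the induction on the good chains, and then the extension step is immediate because the copy of $T\setminus I$ itself would otherwise be a witness. You instead prune $\cL$ by discarding chains through \emph{rare} sets on the single attachment level, run the induction on the survivors, and only afterwards do a second count showing that, among the many $\cL$-chains through the (necessarily non-rare) attachment set $\cR=f_0(r)$, some chain avoids the copy already built. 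Your mechanism is more local (one level, one set, no witness bookkeeping); the paper's bottleneck device buys an extension step with no residual counting and constants that close comfortably for every admissible $n$. Your preparatory observations (intervals in tree posets are chains, rank-preservation of copies of a saturated height-$k$ poset inside $k$ consecutive levels, the unique attachment edge $r\lessdot z_0$) are all correct and are implicitly used in the paper as well.

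The one genuine loose end is exactly the inequality you flagged, and your normalization makes it tighter than necessary. Extending \emph{upward} (leaf made maximal), a blocking set is a superset of $\cR$, so the fraction of chains through $\cR$ that it kills is only bounded by $1/(n-|\cR|)$, and $n-|\cR|$ can be as small as about $n/4-k$ since $|\cR|$ may be as large as $i+k-2\le 3n/4+k-2$; with the crude bound of $|T\setminus I|$ blockers and $|I|=1$ (so gap $=\binom{|T|+1}{2}-\binom{|T\setminus I|+1}{2}=|T|$ while $|T\setminus I|=|T|-1$), the comparison does not close without extra input. It can be rescued --- only elements of the copy \emph{above} the attachment level can block, so there are at most $|T|-k$ blockers, and the hypothesis $|\cL|>\frac{4\binom{|T|+1}{2}}{n}n!$ is vacuous unless $n>4\binom{|T|+1}{2}$ --- but this is precisely the delicate bookkeeping you deferred. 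The clean fix is the paper's normalization: pass to the opposite poset and the complement family so the leaf is \emph{minimal} and the extension goes downward; then blockers are proper subsets of $\cR$ with $|\cR|\ge i\ge n/4$, each kills at most a $4/n$ fraction of chains through $\cR$, and your threshold gap, being at least $|T|>|T\setminus I|$, finishes the argument with the stated constant. Two small expository corrections: when $|I|=1$ the rank of $r$ is $k-1$, not at most $k-2$; and you should say explicitly that connectivity of $T\setminus I$ forces \emph{every} Hasse edge between $I$ and $T\setminus I$ to be the single edge $r\lessdot z_0$ (not merely that $z_0$ has no other lower cover), since this is what guarantees that the extension is order-preserving.
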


\begin{proof}
We proceed by induction on $|T|$. If $T$ is a chain, then the $k$ sets in any element of $\cL$
form a copy of $T$. In particular, it gives the base case of the induction. So suppose $T$ is not a chain. Then applying Lemma \ref{saturatedgrow}, there exists a leaf $t$ in $T$ and interval $I \subseteq T$ containing $t$ such that $h(T\setminus I)=k$ and $T\setminus I$ is a saturated tree poset. Our aim is to use induction to obtain a copy of $T\setminus I$ in $\cF$ that can be extended to a copy of $T$. Finding a copy of $T\setminus I$ is immediate, but in order to be able to extend it, we need a copy satisfying some additional properties, described later.

By passing to the opposite poset $T'$ of $T$ and considering $\overline{\cF}$, we may assume that $t$ is a minimal element of $T$. There exists a maximal chain $C$ in $T$ that contains $I$, and we have $|C|=k$ as $T$ is saturated. Then $s:=|C\setminus I|=k-|I|\ge 1$.

We need several definitions. Let $F_1\supset F_2\supset \dots \supset F_s$ be a chain with $|F_j|=i+k-j$ for $j=1,\dots,s$. Then this chain is
a \textit{bottleneck} if there exists a family $\cS\subset \cF$ with $|\cS|<|T|$ such that for every fat chain  $F_1 \supset F_2\supset \dots\supset F_s\supset F_{s+1}\supset\dots\supset F_k$ in $\cL$ we have $\cS\cap \{F_{s+1},\dots, F_k\}\neq \emptyset$. Such an $\cS$ is a \textit{witness} to the fact that $F_1,\dots, F_s$ is a bottleneck (and we assume all sets of the witness are contained in $F_s$). We say that a fat chain is \textit{bad} if its top $s$ sets form a bottleneck. A fat chain is \textit{good} if it is not bad. Observe that if there is a copy $\cF_{T\setminus I}$ of $T\setminus I$ consisting of sets of good fat chains, then we can extend $\cF_{T\setminus I}$ to a copy of $T$. Indeed, as the sets $F'_1,\dots,F'_s$ representing $C\setminus I$ in $\cF_{T\setminus I}$ do not form a bottleneck and $|\cF_{T\setminus I}|<|T|$, there must be a good fat chain $F'_1\supset \dots\supset F'_s\supset F'_{s+1}\supset \dots\supset F'_k$ such that $F'_{s+1},\dots,F'_k\notin \cF_{T\setminus I}$, therefore $\cF_{T\setminus I}\cup \{F'_{s+1},\dots, F'_k\}$ is a copy of $T$. Therefore all we need to prove is that there are enough good fat chains to obtain a copy of $T\setminus I$ by induction.


Let us bound the number of bad fat chains. If $|\cC\cap \cF|<s$, then $\cC$ cannot be bad. We partition maximal chains in $2^{[n]}$ according to their $s$th largest set $F_{s}$ from $\cF$. As the top $s$ sets must form a bottleneck, there is a witness $\cS$ to this fact. This means that if $\cC$ is bad, then $\cC$ must meet $\cS$ whose elements are all contained in $F_{s}$. But as $|\cS|<|T|$ and all sets of $2^{F_{s}}\cap \cF$ have size between $n/4$ and $3n/4$, the proportion of those chains that do meet $\cS$ is at most $4|T|/n$ (any proper non-empty subset of $F_{S}$ is contained in at most $1/|F_{s}|$ proportion of chains going through $F_{s}$). This holds independently of the choice of $F_{s}$, thus the number of bad fat chains is at most $\frac{4|T|}{n}n!$. 
So the number of good fat chains is at least
\[
|\cL|-\frac{4|T|}{n}n! \ge \frac{4(\binom{|T|+1}{2}-|T|)}{n}n!=\frac{4\binom{|T|}{2}}{n}n!.
\]
As $|T\setminus I|<|T|$, the induction hypothesis implies the existence of a copy of $T\setminus I$ among the 
sets contained in good fat chains, as required.
\end{proof}

\vskip 0.3truecm

The next lemma essentially states that if a a $T$-free family is contained in the union of $k$ consecutive levels,  then its size is asymptotically at most the cardinality of the $k-1$ largest levels. Formally, let $b(i)=b_{k,n}(i)=\max\{\binom{n}{j}:i\le j \le i+k-1\}$. So if $i\le n/2-k+1$, then $b(i)=\binom{n}{i+k-1}$, if $i\ge n/2$, then $b(i)=\binom{n}{i}$, while if $n/2-k+1<i<n/2$, then $b(i)=\binom{n}{\lfloor n/2\rfloor}$.

\begin{lemma}\label{bukhuj} If $T$ is a tree poset of height $k$, then there exists $n_0$ such that for $n>n_0$, $n/4\le i\le 3n/4-k$ any $\cF\subset \bigcup_{j=i}^{i+k-1} \binom{[n]}{j}$ of size at least $\left(k-1+\frac{k4|T|^2}{n}\right)b(i)$ contains a copy of $T$.
\end{lemma}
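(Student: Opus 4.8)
The plan is to obtain this as a short corollary of Lemmas~\ref{bukh1} and~\ref{bukhmain}, once one has reduced to the case when $T$ is saturated. So the first step is that reduction: by Lemma~\ref{inducedsaturated}, $T$ is an induced subposet of a saturated tree poset $\widehat T$ with $h(\widehat T)=h(T)=k$, and since $T$ is a subposet of $\widehat T$, any order-preserving copy of $\widehat T$ in a family restricts to an order-preserving copy of $T$. As $|\widehat T|$ is bounded by a function of $|T|$ alone, it suffices to prove the statement when $T$ itself is saturated — replacing $T$ by $\widehat T$ only changes the constant in front of $1/n$, which we absorb by enlarging $n_0$. (The case $|T|=1$ is trivial, since the hypothesis already forces $\cF\neq\emptyset$, so assume from now on that $T$ is saturated of height $k$ with $|T|\geq 2$.)

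For the main step, set $\eps:=\frac{4k|T|^2}{n}$, so that the hypothesis reads $|\cF|\geq (k-1+\eps)\,b(i)$. Every member $F$ of $\cF$ lies in one of the levels $i,i+1,\dots,i+k-1$, so $\binom{n}{|F|}\leq b(i)$, and therefore
\[
\lambda_n(\cF)=\sum_{F\in\cF}\frac{1}{\binom{n}{|F|}}\ \geq\ \frac{|\cF|}{b(i)}\ \geq\ k-1+\eps.
\]
Since $\cF\subseteq\bigcup_{j=i}^{i+k-1}\binom{[n]}{j}$, Lemma~\ref{bukh1} then yields at least $\frac{\eps}{k}\,n!=\frac{4|T|^2}{n}\,n!$ fat chains; let $\cL$ be the collection of all of them. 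I would then apply Lemma~\ref{bukhmain} to $\cL$: its hypotheses hold because $n/4\leq i\leq 3n/4-k\leq 3n/4$, because $\cF$ is contained in $k$ consecutive levels, and because
\[
|\cL|\ \geq\ \frac{4|T|^2}{n}\,n!\ >\ \frac{2|T|(|T|+1)}{n}\,n!\ =\ \frac{4\binom{|T|+1}{2}}{n}\,n!,
\]
the strict inequality being simply $|T|>1$. Hence Lemma~\ref{bukhmain} produces a copy of $T$ inside $\cF$, which is exactly what is needed.

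The only point that genuinely requires attention — and it is minor — is the saturation step: passing from $T$ to $\widehat T$ can increase the number of elements, so strictly speaking what this argument proves is that $|\cF|\geq \bigl(k-1+O_{|T|}(1/n)\bigr)\,b(i)$ suffices, with an implied constant possibly somewhat larger than $4k|T|^2$. This is interchangeable with the stated bound for every use in the paper (for instance in the proof of Theorem~\ref{dirtree}), where only the fact that the error term tends to $0$ for a fixed tree $T$ is invoked. Everything else is routine: the inequality $\binom{n}{|F|}\leq b(i)$, the double counting — already packaged inside Lemma~\ref{bukh1} — and the arithmetic comparison $4|T|^2>2|T|(|T|+1)$.
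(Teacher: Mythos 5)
Your proposal is correct and follows essentially the same route as the paper: reduce to a saturated tree poset via Lemma~\ref{inducedsaturated}, bound the Lubell function by $|\cF|/b(i)$, invoke Lemma~\ref{bukh1} to produce $\frac{4|T|^2}{n}n!$ fat chains, and feed them to Lemma~\ref{bukhmain}. You even flag, more carefully than the paper does, that passing to the saturated poset enlarges $|T|$ and so the stated constant should really be in terms of the saturated poset's size, which is harmless for the application in Theorem~\ref{dirtree}.
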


\begin{proof} By Lemma \ref{inducedsaturated} we may suppose that $T$ is a saturated tree poset. Assume $\cF\subseteq \bigcup_{j=i}^{i+k-1} \binom{[n]}{j}$ is a $T$-free family that contains at least $\left(k-1+\frac{k4|T|^2}{n}\right)b(i)$ sets. Then $\cF\subseteq \bigcup_{j=i}^{i+k-1} \binom{[n]}{j}$ implies that $\lambda_n(\cF)\ge k-1+\frac{k4|T|^2}{n}$.

Let $\varepsilon=4k|T|^2/n$. Then we can apply Lemma \ref{bukh1} to find $4|T|^2n!/n$ fat chains. Then we can apply Lemma \ref{bukhmain} with $k=h(T)$ to obtain a copy of $T$ in $\cF$, contradicting the $T$-free property of $\cF$.
\end{proof}

With  Lemma \ref{bukhuj} in hand, we can now prove Theorem \ref{dirtree}. Let us consider a $\overrightarrow{T}$-free family $\cF$. Let $T$ be the poset of $\overrightarrow{T}$ and let $T^*$ be the saturated poset containing $T$ with $h(T)=h(T^*)=k$ - guaranteed by Lemma \ref{inducedsaturated}. For any integer $0\le i \le n-k+1$, let $\cF_i=\{F\in \cF: i\le |F|\le i+k-1\}$. Observe that the $\overrightarrow{T}$-free property of $\cF$ implies that $\cF_i$ is $T^*$-free for every $i$. Note that every $F\in \cF$ belongs to exactly $k$ families $\cF_i$ unless $|F|<k-1$ or $|F|>n-k+1$. It is well-known that $\left|\binom{[n]}{\le n/4}\cup \binom{[n]}{\ge 3n/4}\right|=o\left(\frac{1}{n}2^{n}\right)$, therefore using Lemma \ref{bukhuj} we obtain
\[
k|\cF|-o\left(\frac{1}{n}2^n\right)\le \sum_{i=n/4}^{3n/4}|\cF_i|\le \left(k-1+\frac{k4|T|^2}{n}\right)\sum_{i=n/4}^{3n/4}b(i)\le \left(k-1+\frac{k4|T|^2}{n}\right)\left(2^n+k\binom{n}{\lfloor n/2\rfloor}\right).
\]
After rearranging, we get $|\cF|\le\left(\frac{k-1}{k}+o(1)\right)2^n$.

\subsection{Proof of Theorem \ref{V}}
To prove the lower bound, we show a $\overrightarrow{V_2}$-free family in $\overrightarrow{Q_n}$ of size $2^{n-1}+1$. Simply take every second level in the hypercube starting from the $(n-1)$st level and also take the vertex corresponding to $[n]$.

We prove the upper bound by induction on $n$ (it is easy to see the base case $n=2$). We will need the following simple claim.

\begin{claim}\label{vclaim}
Let $\cF\subset 2^{[n]}$ is a maximal $\overrightarrow{V_2}$-free family, then $\cF$ contains the set $[n]$ and at least one set of size $n-1$.
\end{claim}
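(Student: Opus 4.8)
\textbf{Proof plan for Claim~\ref{vclaim}.}
The plan is to show both containments by a short maximality argument. First I would show that a maximal $\overrightarrow{V_2}$-free family $\cF$ must contain $[n]$: the vertex $[n]$ is a sink in $\overrightarrow{Q_n}$ (it has no out-neighbours), so it cannot be the source (the middle, out-degree-$2$ vertex) of any copy of $\overrightarrow{V_2}$, nor can it play either of the two leaf roles in a way that creates a new copy — adding a sink to a family only adds edges pointing \emph{into} $[n]$, and $\overrightarrow{V_2}$ has its unique degree-$2$ vertex as a source, so the new vertex $[n]$ can only ever be an endpoint of a directed path of length $1$ inside any potential copy. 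Hence $\cF\cup\{[n]\}$ is still $\overrightarrow{V_2}$-free, and maximality forces $[n]\in\cF$.

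Next I would argue that $\cF$ contains at least one set of size $n-1$. Suppose not. Consider adding some $(n-1)$-element set $A=[n]\setminus\{x\}$ to $\cF$. The only edges of $\overrightarrow{Q_n}$ incident to $A$ go from the $(n-2)$-sets $A\setminus\{y\}$ up to $A$, and from $A$ up to $[n]$. So in $\cF\cup\{A\}$ the vertex $A$ has exactly one out-neighbour, namely $[n]$ (which is in $\cF$ by the previous paragraph). For $A$ to be the source of a $\overrightarrow{V_2}$ we would need $A$ to have two out-neighbours in $\cF\cup\{A\}$, which is impossible. For $A$ to be a leaf of a new copy of $\overrightarrow{V_2}$, some other vertex $B\in\cF$ must be a source with out-neighbours $A$ and some $C$; but the only in-neighbour-direction available means $B$ would have to be an $(n-2)$-set $A\setminus\{y\}$ with $A\setminus\{y\}\to A$ and $A\setminus\{y\}\to C$ for some $C\in\cF$ with $|C|=n-1$, $C\neq A$ — and such a $C$ does not exist in $\cF$ by our assumption that $\cF$ has no $(n-1)$-set. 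Checking the remaining role assignments similarly, no copy of $\overrightarrow{V_2}$ is created, so $\cF\cup\{A\}$ is $\overrightarrow{V_2}$-free, contradicting maximality. Therefore $\cF$ already contains an $(n-1)$-set.

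I expect the only mild subtlety to be the case analysis in the second part: one must be careful to enumerate all the ways the newly added set $A$ (or $[n]$) could occupy a vertex of $\overrightarrow{V_2}$ — as the source, or as one of the two heads — and to use the structural facts that $[n]$ is a sink and that an $(n-1)$-set has $[n]$ as its unique out-neighbour. Once those two observations are in place, each case is immediate. No computation is needed; the argument is purely about the local edge structure at the top two levels of $\overrightarrow{Q_n}$.
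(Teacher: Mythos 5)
Your proposal is correct and takes essentially the same route as the paper: a maximality argument showing that $[n]$ can always be added (since an $(n-1)$-set has $[n]$ as its unique cover), and that if $\cF$ has no $(n-1)$-set then one can be added (since an $(n-2)$-set would need a second $(n-1)$-cover in $\cF$, which is unavailable). One small point: in your first paragraph the remark that $[n]$ "can only ever be an endpoint of a directed path of length $1$" does not by itself rule out $[n]$ serving as a head of a new copy of $\overrightarrow{V_2}$ --- the correct reason, which you do state later, is that the would-be source is an $(n-1)$-set whose only out-neighbour is $[n]$, so no second head exists.
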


\begin{proof}[Proof of Claim]
First note that $[n]$ can be added to any $\overrightarrow{V_2}$-free family as there is only one subset of $[n]$ of size $n$. Also, if $\cF$ does not contain any set of size $n-1$, then one such set $S$ can be added to $\cF$. Indeed, if we add $S$, no copy of $\overrightarrow{V_2}$ having sets of size $n-1$ and $n$ will be created because $[n]$ is the only set of size $n$ in $\cF\cup \{S\}$. Furthermore, no copy of $\overrightarrow{V_2}$ having sets of size $n-2$ and $n-1$ will be created as $S$ is the only set of size $n-1$ in $\cF\cup \{S\}$.
\end{proof}

Now we are ready to prove Theorem \ref{V}. Let $\cF\subset 2^{[n]}$ be a $\overrightarrow{V_2}$-free family. For some $x\in [n]$, define
$$\cF_x^-=\{F~|~F\in\cF,~x\not\in F\}~~~\text{and}~~~\cF_x^+=\{F\backslash\{x\}~|~F\in\cF,~x\in F\}.$$

Then $\cF_x^-,\cF_x^+\subset 2^{[n]\backslash\{x\}}$ and they are also $\overrightarrow{V_2}$-free. By induction, we have
$$|\cF|=|\cF_x^-|+|\cF_x^+|\le 2^{n-2}+1+2^{n-2}+1=2^{n-1}+2.$$

Assume that $|\cF|=2^{n-1}+2.$ Then $|\cF_x^-|=|\cF_x^+|=2^{n-2}+1$ must hold for all $x\in [n]$. By Claim \ref{vclaim}, $|\cF_x^-|=2^{n-2}+1$ implies that $[n]\backslash\{x\}$ and at least one set of size $n-2$ are in $\cF$. This holds for all $x\in [n]$, so all sets of size $n-1$, and at least one set of size $n-2$ are in $\cF$. However, these would form a forbidden $\overrightarrow{V_2}$ in $\cF$, contradicting our original assumption on $\cF$. This proves that $|\cF|\le 2^{n-1}+1$.

\subsection{Proof of Theorem \ref{path}}
Let $U$ be a set of vertices in $Q_n$ such that the subgraph of $Q_n$ induced by $U$ (i.e., $Q_n[U]$) is $\overrightarrow{P_k}$-free. Let $\cF \subset 2^{[n]}$ be a family of subsets corresponding to $U$.

First, we will introduce a weight function. For every $F\in\cF$, let $w(F)=\binom{n}{|F|}$. For a maximal chain $\cC$, let $w(\cC)=\sum_{F\in \cC\cap \cF}w(F)$ denote the weight of $\cC$. Let $\bC_n$ denote the set of all maximal chains in $[n]$. Then
$$\frac{1}{n!}\sum_{\cC\in\bC_n} w(\cC)=\frac{1}{n!}\sum_{\cC\in\bC_n}\sum_{F\in \cC\cap \cF}w(F)=\frac{1}{n!}\sum_{F\in \cF} |F|!(n-|F|)!w(F)=|\cF|.$$

This means that the average of the weights of the full chains equals the size of $\cF$. It means that if we find an upper bound that is valid for the weight of any chain, then this will be an upper bound on $|\cF|$ too.

Our assumption that there is no $\overrightarrow{P_k}$ means that there are no $k$ neighboring members of $\cF$ in a chain. For a given chain $\cC$, let $a_1, a_2,\dots a_t$ denote the sizes of those elements of $\cC$ that are not in $\cF$. Then $0\le a_1<a_2<\dots <a_t\le n$, $a_1\le k-1$, $n-k+1\le a_t$ and $a_{i+1}-a_{i}\le k$ for all $i=1,2,\dots t-1$. The weight of the chain $\cC$ is
$$w(\cC)=2^n-\sum_{i=1}^t \binom{n}{a_i}.$$

We claim that this is maximized when the numbers $\{a_1, a_2,\dots a_t\}$ are all the numbers between 0 and $n$ that give the same residue when divided by $k$.

Assume that $w(\cC)$ is maximized by a different kind of set $\{a_1, a_2,\dots a_t\}$. Then there is an index $i$ such that $a_{i+1}-a_{i}<k$.

If $a_i\le \frac{n}{2}$ then we can decrease the numbers $\{a_1, a_2,\dots a_i\}$ by 1. (If $a_1$ becomes -1 then we simply remove that number.) The resulting set of numbers will still satisfy the conditions and $w(\cC)$ increases. Otherwise, $a_{i+1}> \frac{n}{2}$ must hold. Similarly, we can increase the numbers $\{a_{i+1}, a_{i+2},\dots a_n\}$ by 1 to achieve the same result. We proved that
$$w(\cC)\le 2^n-\min_{j\in [k]}\sum_{i\equiv j ~\text{mod}\ k}\binom{n}{i} =\max_{j\in [k]}\left\{\sum_{i\not\equiv j ~\text{mod}\ k}\binom{n}{i}\right\}$$
holds for any full chain $\cC$. Therefore the same upper bound holds for $|\cF|$ as well.

\subsection*{Acknowledgement}

Research of D. Gerbner was supported by the J\'anos Bolyai Research Fellowship of the Hungarian Academy of Sciences and the National Research, Development and Innovation Office -- NKFIH under the grant K 116769.

Research of A. Methuku was supported by the Hungarian Academy of Sciences and the National Research, Development and Innovation Office -- NKFIH under the grant K 116769.

Research of D.T. Nagy was supported by the \'{U}NKP-17-3 New National Excellence Program of the Ministry of Human Capacities and by National Research, Development and Innovation Office - NKFIH under the grant K 116769.

Research of B. Patk\'os was supported by the National Research, Development and Innovation Office -- NKFIH under the grants SNN 116095 and K 116769.

Research of M. Vizer was supported by the National Research, Development and Innovation Office -- NKFIH under the grant SNN 116095.

All of the authors were funded by the Taiwanese-Hungarian Mobility Program of the Hungarian Academy of Sciences.

\bibliography{vtbib}

\begin{thebibliography}{10}

\bibitem{AKS}
N.~Alon, A.~Krech, and T.~Szab{\'o}.
\newblock Tur{\'a}n's theorem in the hypercube.
\newblock {\em SIAM Journal on Discrete Mathematics}, 21(1):66--72, 2007.

\bibitem{Buk09}
B.~Bukh.
\newblock Set families with a forbidden subposet.
\newblock {\em The Electronic Journal of Combinatorics}, 16(1):142, 2009.

\bibitem{Chu92}
F.~R. Chung.
\newblock Subgraphs of a hypercube containing no small even cycles.
\newblock {\em Journal of Graph Theory}, 16(3):273--286, 1992.

\bibitem{CFGS}
F.~R. Chung, Z.~F{\"u}redi, R.~L. Graham, and P.~Seymour.
\newblock On induced subgraphs of the cube.
\newblock {\em Journal of Combinatorial Theory, Series A}, 49(1):180--187,
  1988.

\bibitem{conlon}
D.~Conlon.
\newblock An extremal theorem in the hypercube.
\newblock {\em the electronic journal of combinatorics}, 17(1):R111, 2010.

\bibitem{erdos1984}
P.~Erd\H{o}s.
\newblock On some problems in graph theory, combinatorial analysis and
  combinatorial number theory.
\newblock {\em Graph Theory and Combinatorics (Cambridge, 1983), Academic
  Press, London}, pages 1--17, 1984.

\bibitem{FurOzk09}
Z.~F{\"u}redi and L.~{\"O}zkahya.
\newblock On 14-cycle-free subgraphs of the hypercube.
\newblock {\em Combinatorics, Probability \& Computing}, 18(5):725, 2009.

\bibitem{FurOzk11}
Z.~F{\"u}redi and L.~{\"O}zkahya.
\newblock On even-cycle-free subgraphs of the hypercube.
\newblock {\em Journal of Combinatorial Theory Series A}, 118(6):1816--1819,
  2011.

\bibitem{FS2013}
Z.~F{\"u}redi and M.~Simonovits.
\newblock The history of degenerate (bipartite) extremal graph problems.
\newblock In {\em Erd{\H{o}}s Centennial}, pages 169--264. Springer, 2013.

\bibitem{griggs2016progress}
J.~R. Griggs and W.-T. Li.
\newblock Progress on poset-free families of subsets.
\newblock pages 317--338, 2016.

\bibitem{JT}
J.~R. Johnson and J.~Talbot.
\newblock Vertex {T}ur{\'a}n problems in the hypercube.
\newblock {\em Journal of Combinatorial Theory, Series A}, 117(4):454--465,
  2010.

\bibitem{JE}
K.~A. Johnson and R.~Entringer.
\newblock Largest induced subgraphs of the n-cube that contain no 4-cycles.
\newblock {\em Journal of Combinatorial Theory, Series B}, 46(3):346--355,
  1989.

\bibitem{K}
E.~Kostochka.
\newblock Piercing the edges of the n-dimensional unit cube.
\newblock {\em Diskret. Analiz Vyp}, 28(223):55--64, 1976.

\bibitem{T1941}
P.~Tur{\'a}n.
\newblock On an external problem in graph theory.
\newblock {\em Mat. Fiz. Lapok}, 48:436--452, 1941.

\end{thebibliography}
\bibliographystyle{abbrv}

\end{document}